\documentclass{amsart}
\usepackage{cases}
\usepackage{amsfonts}
\usepackage{mathrsfs}
\usepackage{amsmath}
\usepackage{amsfonts,amssymb,amsmath,amsthm}
\usepackage{url}
\usepackage{enumerate}
\usepackage{amscd}
\usepackage{mathrsfs,graphicx}
\usepackage{latexsym}
\usepackage[all]{xy}
\usepackage{epsfig}

\urlstyle{sf}
\newtheorem*{thm*}{Theorem}
\newtheorem{thm}{Theorem}[section]
\newtheorem{lem}[thm]{Lemma}

\theoremstyle{definition}
\newtheorem{df}[thm]{Definition}

\newtheorem*{ques*}{Question}
\numberwithin{equation}{section}
\newcommand{\Ext}{\mbox{\rm Ext}}
\newcommand{\Hom}{\mbox{\rm Hom}}

\newcommand{\Ker}{\mbox{\rm Ker}}

\author{Gang Yang}\address{School of Mathematics, Physics and Software Engineering, Lanzhou Jiaotong University,  Lanzhou {\rm 730070}, P.R. China} \email{yanggang10@gmail.com}
\author{Li Liang}\address{School of Mathematics, Physics and Software Engineering, Lanzhou Jiaotong University,  Lanzhou {\rm 730070}, P.R. China} \email{lliangnju@gmail.com}
\thanks{\textbf{2010 Mathematics Subject Classification}: 16E05, 16E10, 18G05, 18G10, 18G15.}
\thanks{\textbf{Key words}: Gorenstein flat modules; cotorsion pairs; precovers.}
\thanks{This work was partly supported
by NSF of China (Grant No. 11101197) and the Program of Science and
Technique of Gansu Province (No. 1107RJZA233).}

\begin{document}

\title[All modules have Gorenstein flat precovers]{\Large All modules have Gorenstein flat precovers}

\begin{abstract}
It is known that every $R$-module has a flat precover. We show
in the paper that every $R$-module has a Gorenstein flat precover.\\
\end{abstract}
\maketitle

\section {\large\bf Introduction}
A class $\mathcal{L}$ of objects of an abelian category
$\mathcal{C}$ is called a precovering class \cite{Enoc81} if every
object of $\mathcal{C}$ has an $\mathcal{L}$-precover (see
Definition 2.2). In the language of \cite{AR} this means that
$\mathcal{L}$ is a contravariantly finite subcategory. Precovering
classes (or contravariantly finite subcategories) play a great
important role in homological algebra. One of the reasons is that
one can construct proper $\mathcal{L}$-resolutions using a
precovering class $\mathcal{L}$ to compute homology and cohomology
(see \cite{EJ00} for details).

For any ring $R$, recall from \cite{EJT93} that an $R$-module $G$ is
Gorenstein flat if there exists an exact sequence $\cdots\rightarrow
F^{-2}\rightarrow F^{-1}\rightarrow F^0\rightarrow F^1\rightarrow
F^2\rightarrow\cdots$ of flat $R$-modules with
$G=\text{Ker}(F^0\rightarrow F^1)$ such that $I\otimes_R-$ leaves
the sequence exact whenever $I$ is an injective right $R$-module.
Obviously, flat $R$-modules are Gorenstein flat. Further studies on
Gorenstein flat $R$-modules can be found in \cite{Ben09, EJ00,
EJLR04, EJT93, Holm04a}. Bican, El Bashir and Enochs \cite{BBE01}
proved that the class of flat $R$-modules is a precovering class. On
the other hand, Enochs, Jenda and L\'{o}pez-Ramos \cite{EJLR04}
proved that the class of Gorenstein flat $R$-modules is a
precovering class over a right coherent ring. Furthermore, it was
shown in \cite{YL} that the result holds over a left GF-closed ring
(that is, a ring over which the class of Gorenstein flat $R$-modules
is closed under extensions). In this paper, we prove that the class
of Gorenstein flat $R$-modules is a precovering class over any ring
as follows.

\vspace{0.3cm}\hspace{-0.4cm}\textbf{Theorem A.} \emph{Let $R$ be
any ring. Then every $R$-module has a Gorenstein flat precover.}
\vspace{0.3cm}

We prove the above result by constructing a perfect cotorsion pair
in the category of complexes of $R$-modules.

\section {\large\bf Preliminaries}
Throughout the paper, we assume all rings have an identity and all
modules are unitary. Unless stated otherwise, an $R$-module will be
understood to be a left $R$-module.

To every complex $\xymatrix@C=0.6cm{C= \cdots \ar[r]^{} & C^{m-1}
\ar[r]^{d^{m-1}} &
 C^m \ar[r]^{d^{m}} & C^{m+1} \ar[r]^{d^{m+1}} & \cdots
 },$ the $m$th cycle  is defined as $\Ker(d^m)$ and is denoted
$\text{Z}^m(C)$. The $m$th boundary  is $\text{Im}(d^{m-1})$ and is
denoted $\text{B}^m(C)$. The  $m$th  homology of $C$ is the module
$$\text{H}^m(C)=\text{Z}^m(C)/\text{B}^m(C).$$ A complex $C$ is exact
if $\text{H}^m(C)=0$ for all $m\in\mathbb{Z}$. For an integer $n$,
$C[n]$ denotes the complex such that $C[n]^m=C^{m+n}$ and whose
boundary operators are $(-1)^nd^{m+n}$.  Given an $R$-module $M$, we
denote by $\overline{M}$ the complex $$\xymatrix@C=0.6cm{
  \cdots \ar[r]^{ }  & 0 \ar[r]^{ } & M \ar[r]^{id} &
  M\ar[r]^{ } & 0 \ar[r]^{ } &  \cdots  }$$
with $M$ in the $-1$ and 0th degrees and $\underline{M}$ the complex
$$\xymatrix@C=0.6cm{
  \cdots \ar[r]^{ }  & 0 \ar[r]^{ }  &
  M \ar[r]^{ } & 0 \ar[r]^{ } &  \cdots  }$$ with $M$ in the $0$th
degree. A complex $C$ is finitely presented (generated) if only
finitely many components are nonzero and each $C^m$ is finitely
presented (generated). Clearly, both $\overline{R}$ and
$\underline{R}$ are finitely presented. Recall that a complex $P$ is
projective if it is exact and $\text{Z}^m(P)$ is a projective
$R$-module for each $m\in \mathbb{Z}$, so it is easy to see that $P$
is a direct sum  of the form $\overline{Q}[m]$ for some projective
$R$-modules $Q$. Given two complexes $X$ and $Y$, we let
$\Hom^\bullet(X, Y)$ denote a complex of $\mathbb{Z}$-modules with
$m$th component
$$\Hom^\bullet(X, Y)^m=\prod_{t\in \mathbb{Z}}\Hom(X^t, Y^{m+t})$$
and such that if $f\in\Hom^\bullet(X, Y)^m$ then
$$(d^m(f))^n=d_Y^{n+m}\circ f^n-(-1)^{m}f^{n+1}\circ d_X^n.$$
We say $f:X\rightarrow Y$ a morphism of complexes if $d_Y^{n}\circ
f^n=f^{n+1}\circ d_X^n$ for all $n\in \mathbb{Z}$. $\Hom(X, Y)$
denotes the set of morphisms of complexes from $X$ to $Y$ and
$\Ext^i(X, Y)$ $(i\geq1)$ are right derived functors of $\Hom$.
Obviously, $\Hom(X, Y)=\text{Z}^0(\Hom^\bullet(X, Y))$. We let
$\underline{\Hom}(X, Y)$ denote a complex with $\underline{\Hom}(X,
Y)^m$ the abelian group of morphisms from $X$ to $Y[m]$ and with a
boundary operator given by: $f\in\underline{\Hom}(X, Y)^m$, then
$d^m(f): X\rightarrow Y[m+1]$ with $d^m(f)^n=(-1)^md_Y\circ f^n$,
$\forall n\in \mathbb{Z}$. We note that the new functor
$\underline{\Hom}(X, Y)$ has right derived functors whose values
will be complexes. These values should certainly be denoted
$\underline{\Ext}^i(X, Y)$. It is not hard to see that
$\underline{\Ext}^i(X, Y)$ is the complex
$$\cdots\rightarrow{\Ext}^i(X, Y[n-1])\rightarrow
{\Ext}^i(X, Y[n])\rightarrow{\Ext}^i(X, Y[n+1])\rightarrow\cdots$$
with boundary operator induced by the boundary operator of $Y$.

If $X$ is a complex of right $R$-modules and $Y$ is a complex of
left $R$-modules, let $X\otimes^\bullet Y$ be the usual tensor
product of complexes. I.e., $X\otimes^\bullet Y$ is the complex of
abelian groups with
$$(X\otimes^\bullet Y)^m=\bigoplus_{t\in \mathbb{Z}}X^t\otimes_R Y^{m-t}$$
and $$d(x\otimes y)=d_X^t(x)\otimes y+(-1)^{t}x\otimes
d_Y^{m-t}(y)$$ for $x\in X^t$ and $y\in Y^{m-t}$. Obviously,
$\underline{M}\otimes^\bullet Y=M\otimes_R Y= \cdots\rightarrow
M\otimes_R Y^{-1}\rightarrow M\otimes_R Y^0\rightarrow M\otimes_R
Y^1\rightarrow\cdots$ for a right $R$-module $M$. We define
$X\otimes Y$ to be $\frac{(X\otimes^\bullet
Y)}{\text{B}(X\otimes^\bullet Y)}$. Then with the maps
$$\frac{(X\otimes^\bullet
Y)^n}{\text{B}^n(X\otimes^\bullet Y)}\rightarrow
\frac{(X\otimes^\bullet Y)^{n+1}}{\text{B}^{n+1}(X\otimes^\bullet
Y)}, \quad x\otimes y\mapsto d_X(x)\otimes y,
$$
where $x\otimes y$ is used to denote the coset in
$\frac{(X\otimes^\bullet Y)^n}{\text{B}^n(X\otimes^\bullet Y)}$, we
get a complex of abelian groups.

One can found the next result in  \cite[Proposition 4.2.1]{Garc99}.

\begin{lem} \label{l2.1} Let $X$, $Y$, $Z$ be complexes.  Then we have the following natural isomorphisms:
\begin{enumerate}
  \item[$(1)$] $X\otimes(Y\otimes Z)\cong (X\otimes Y)\otimes Z$;
  \item[$(2)$] For a right $R$-module $M$, $\overline{M}[n]\otimes Y\cong M\otimes_R Y[n]$;
  \item[$(3)$]$X\otimes (\varinjlim Y_i)\cong \varinjlim (X\otimes Y_i)$
  for a directed family $(Y_i)_{i\in I}$ of complexes.
\end{enumerate}
\end{lem}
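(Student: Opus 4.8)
The plan is to read all three statements as assertions that the boundary--quotient functor $(-)/\text{B}(-)$ is compatible with the classical constructions on the ordinary total tensor product $\otimes^\bullet$, for which the analogues of $(1)$--$(3)$ are standard. Throughout I write $\text{T}(X,Y)=X\otimes^\bullet Y$, so that by definition $X\otimes Y=\text{T}(X,Y)/\text{B}(\text{T}(X,Y))$ carries the differential $[x\otimes y]\mapsto[d_X(x)\otimes y]$. A preliminary step, used tacitly below, is to record that this differential is well defined on the quotient and squares to zero; this rests on the identity $d_X(x)\otimes d_Y(y)=d_{\text T}\bigl(x\otimes d_Y(y)\bigr)$, which exhibits $d_X(x)\otimes d_Y(y)$ as a boundary.

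I would dispose of $(3)$ first, as it is cleanest. Since $\varinjlim$ commutes with $\otimes_R$ and with direct sums, one gets $\text{T}(X,\varinjlim Y_i)^n=\varinjlim\text{T}(X,Y_i)^n$ degreewise, compatibly with the differentials. Because a directed colimit is exact it commutes with the formation of images, so $\text{B}^n\bigl(\text{T}(X,\varinjlim Y_i)\bigr)=\varinjlim\text{B}^n\bigl(\text{T}(X,Y_i)\bigr)$; passing to quotients yields the asserted isomorphism, and naturality in the $Y_i$ identifies the comparison map as the obvious one.

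For $(2)$ the point is that $\overline{M}[n]$ is the two--term complex $M\xrightarrow{\ \mathrm{id}\ }M$, so in each degree $\text{T}(\overline{M}[n],Y)$ is a sum of just two copies of $M\otimes_R Y^{\bullet}$ connected by an identity map arising from $d_{\overline{M}[n]}$. I would compute $\text{T}(\overline{M}[n],Y)^{\bullet}/\text{B}^{\bullet}$ directly: modding out the boundaries performs a Gaussian--elimination collapse that cancels the identity summand and leaves a single copy of $M\otimes_R Y^{\bullet}$ in each degree, while the induced differential $[x\otimes y]\mapsto[d_{\overline{M}[n]}(x)\otimes y]$ becomes $\mathrm{id}_M\otimes d_Y$ up to sign. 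Identifying the result with $M\otimes_R Y[n]$, after the reindexing dictated by the placement of $\overline{M}$, finishes this part; the computation is elementary but must be carried out with care about degrees and signs.

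The main obstacle is the associativity $(1)$, because it iterates the boundary quotient at two different levels: $X\otimes(Y\otimes Z)$ quotients after forming $Y\otimes Z$ and again after tensoring with $X$, whereas $(X\otimes Y)\otimes Z$ quotients in the opposite order. My plan is to route both through a single triple product $W:=X\otimes^\bullet Y\otimes^\bullet Z$, with quotient $W/\text{B}(W)$ and differential $[x\otimes y\otimes z]\mapsto[d_X(x)\otimes y\otimes z]$. The classical associator $\Phi\colon X\otimes^\bullet(Y\otimes^\bullet Z)\xrightarrow{\ \cong\ }(X\otimes^\bullet Y)\otimes^\bullet Z$ sends $x\otimes(y\otimes z)\mapsto(x\otimes y)\otimes z$; the crux is to show that, after applying the outer $X\otimes^\bullet-$ (resp. $-\otimes^\bullet Z$) to the inner quotient and then taking the outer boundary quotient, $\Phi$ carries the total relation submodule on one side onto that on the other, so that it descends to isomorphisms of both iterated products with $W/\text{B}(W)$. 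The delicate bookkeeping is that $X^t\otimes_R(-)$ is only right exact, so the inner boundaries need not embed but only map into the triple total complex; I would verify that nonetheless the image of the inner boundaries together with the outer boundaries generates exactly $\text{B}(W)$ on each side, and then check that $\Phi$ intertwines the two special differentials, both of which act by $d_X$ on the first factor. Granting this comparison lemma, associativity follows, and all three isomorphisms are natural by construction.
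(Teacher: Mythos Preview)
The paper does not prove this lemma: the sentence preceding it reads ``One can found the next result in \cite[Proposition 4.2.1]{Garc99},'' and no argument is supplied. So there is no proof here to compare against, and your sketches for parts~(2) and~(3) are along the standard direct lines one would expect.

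Your plan for part~(1), however, contains a genuine error. You propose to identify both iterated products with $W/\text{B}(W)$, where $W=X\otimes^\bullet Y\otimes^\bullet Z$, by checking that ``the image of the inner boundaries together with the outer boundaries generates exactly $\text{B}(W)$ on each side.'' That claim is false: the submodule of relations killed in forming $X\otimes(Y\otimes Z)$ strictly contains $\text{B}(W)$ in general. Take $X=Y=\overline{R}$ and $Z=\underline{R}$ over any nonzero ring. Then $W\cong X\otimes^\bullet Y$ is the complex $R\to R\oplus R\to R$ in degrees $-2,-1,0$; here $\text{B}^{-1}(W)$ is the rank-one antidiagonal, so $(W/\text{B}(W))^{-1}\cong R\neq 0$. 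On the other hand $Y\otimes Z$ is $R$ concentrated in a single degree, and hence so is $X\otimes(Y\otimes Z)$; thus $X\otimes(Y\otimes Z)\not\cong W/\text{B}(W)$. What your reduction actually yields is $X\otimes(Y\otimes Z)\cong W/N_\ell$ and $(X\otimes Y)\otimes Z\cong W/N_r$, where $N_\ell$ is generated by the inner relations $x\otimes d_Yy\otimes z+(-1)^{|y|}x\otimes y\otimes d_Zz$ together with the outer relations $d_Xx\otimes y\otimes z+(-1)^{|x|}x\otimes d_Yy\otimes z$, while $N_r$ is generated by the latter together with $d_Xx\otimes y\otimes z+(-1)^{|x|+|y|}x\otimes y\otimes d_Zz$. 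Both $N_\ell$ and $N_r$ contain $\text{B}(W)$ properly, and the statement that actually needs proof is $N_\ell=N_r$; your proposal does not address this comparison.
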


\begin{df}
Let $\mathcal{L}$ be a class of objects of an abelian category
$\mathcal{C}$ and $X$ an object. A homomorphism $f: L\rightarrow X$
is called an $\mathcal{L}$-precover if $L\in\mathcal{L}$ and the
abelian group homomorphism $\text{Hom}(L', f): \text{Hom}(L',
L)\rightarrow \text{Hom}(L', X)$ is surjective for each
$L'\in\mathcal{L}$.   An $\mathcal{L}$-precover $f: L\rightarrow X$
is called an $\mathcal{L}$-cover if every endomorphism $g:
L\rightarrow L$ such that $fg=f$ is an isomorphism. Dually we have
the definitions of an $\mathcal{L}$-preenvelope and  an
$\mathcal{L}$-envelope.
\end{df}

\begin{df} A pair $(\mathcal{A},
\mathcal{B})$ in an abelian category $\mathcal{C}$ is called a
cotorsion pair if the following conditions
hold:
\begin{enumerate}
                \item $\Ext^1_\mathcal{C}(A, B)=0$ for all $A\in\mathcal{A}$ and $B\in\mathcal{B}$;
                \item If $\Ext^1_\mathcal{C}(A, X)=0$ for all $A\in\mathcal{A}$ then $X\in\mathcal{B}$;
                \item If $\Ext^1_\mathcal{C}(X, B)=0$ for all $B\in\mathcal{B}$ then $X\in\mathcal{A}$.
\end{enumerate}
\end{df}

We think of a cotorsion pair $(\mathcal{A}, \mathcal{B})$ as being
$\lq\lq$orthogonal with respect to $\Ext^1_\mathcal{C}$". This is
often expressed with the notation $\mathcal{A}={^\perp\mathcal{B}}$
and $\mathcal{B}=\mathcal{A}^\perp$. The notion of a cotorsion pair
was first introduced by Salce in \cite{S79} and rediscovered by
Enochs and coauthors in 1990's. Its importance in homological
algebra has been shown by its use in the proof of the existence of
flat covers of modules over any ring \cite{BBE01}.

\begin{df} A cotorsion pair $(\mathcal{A}, \mathcal{B})$ is
said to be complete if for any object $X$ there are exact sequences
$0\rightarrow X\rightarrow B\rightarrow A\rightarrow 0$ and
$0\rightarrow B'\rightarrow A'\rightarrow X\rightarrow 0$ with $A,
A'\in \mathcal{A}$ and $B, B'\in \mathcal{B}$.
\end{df}

\begin{df}  A cotorsion pair $(\mathcal{A},
\mathcal{B})$ is said to be cogenerated by a set if there is a set
$\mathcal{S}\subset \mathcal{A}$ such that
$\mathcal{S}^\bot=\mathcal{B}$.
\end{df}

By a well-known theorem of Eklof and Trlifaj \cite{ET01}, a
cotorsion pair $(\mathcal{A}, \mathcal{B})$ is complete if it is
cogenerated by a set (see \cite{BBE01}).

\begin{df} A cotorsion pair $(\mathcal{A}, \mathcal{B})$ is
said to be perfect if every object has an $\mathcal{A}$-cover and a
$\mathcal{B}$-envelope.
\end{df}

%%%%%%%%%%%%%%%%%%%%%%%%%%%%%%%%%%%%%%%%%%%%%%%%%%%%%%%%%%%%%%%%%%%%%%%%%%%%%%%%%%%%%%%%%%%%%%%%%%%%%%%%%%%%%%%%%%%%%%%%%%%%

\section{\bf All modules have Gorenstein flat precovers}\label{ns}
 Recall from \cite{Garc99} that an exact
sequence $0\rightarrow P\rightarrow X\rightarrow X/P\rightarrow 0$
of complexes is \emph{pure} if for any complex $Y$ of right
$R$-modules, the sequence $0\rightarrow Y\otimes P\rightarrow
Y\otimes X\rightarrow Y\otimes X/P\rightarrow 0$ is exact. We state
here the characterizations of purity that can be found in
\cite[Theorem 5.1.3]{Garc99}.

\begin{lem}\label{p2.1} Let
$0\rightarrow P\rightarrow X\rightarrow X/P\rightarrow 0$ be an
exact sequence of complexes.  Then the following statements are
equivalent.
\begin{enumerate}
                \item[$(1)$] $0\rightarrow P\rightarrow X\rightarrow X/P\rightarrow
               0$ is pure;
               \item[$(2)$]  $0 \rightarrow\underline{\Hom}(U, P)\rightarrow\underline{\Hom}(U, X)
                \rightarrow\underline{\Hom}(U, X/P)\rightarrow0$ is exact for any finitely presented complex
                $U$.
             \end{enumerate}
\end{lem}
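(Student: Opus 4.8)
This is \cite[Theorem 5.1.3]{Garc99}; the following is the approach I would take.

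\emph{$(1)\Rightarrow(2)$.} The plan is an obstruction computation. The functor $\underline{\Hom}(U,-)$ is left exact in each degree (it is assembled from the left exact functors $\Hom(U,(-)[k])$), so only the surjectivity of $\underline{\Hom}(U,X)\to\underline{\Hom}(U,X/P)$ needs proof; as this is a morphism of complexes of abelian groups, it suffices to treat each degree, and replacing $X$ by $X[m]$ (purity is preserved by shifts) it is enough to show that every morphism of complexes $\phi\colon U\to X/P$ with $U$ finitely presented lifts to $X$. Pick a presentation $Q_{1}\xrightarrow{\partial}Q_{0}\xrightarrow{\pi}U\to 0$ with $Q_{0},Q_{1}$ finite direct sums of complexes $\overline{R}[n]$; these are projective, so $\phi\pi$ lifts to $\psi_{0}\colon Q_{0}\to X$, and then $\psi_{0}\partial$ lands in $P$, say $\psi_{0}\partial=\iota\theta$ for a unique $\theta\colon Q_{1}\to P$, $\iota\colon P\hookrightarrow X$. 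If $\theta=\eta\partial$ for some $\eta\colon Q_{0}\to P$ then $\psi_{0}-\iota\eta$ annihilates $\partial$ and so factors through $\pi$, giving the desired lift of $\phi$. Thus it remains to show that the class of $\theta$ in $\Coker\big(\Hom(Q_{0},P)\to\Hom(Q_{1},P)\big)$ vanishes, knowing that its image in $\Coker\big(\Hom(Q_{0},X)\to\Hom(Q_{1},X)\big)$ vanishes (there $\theta$ becomes $\psi_{0}\partial$, which is in the image of $\partial^{*}$). The key point is that, by Lemma~\ref{l2.1}$(2)$ together with the natural isomorphism $\underline{\Hom}(\overline{R}[n],Z)\cong\overline{R}[-n-1]\otimes Z$ (a shift of $Z$), the additive duality $\overline{R}[n]\mapsto\overline{R}[-n-1]$ on finite sums of such complexes gives $Z\mapsto\Coker\big(\Hom(Q_{0},Z)\to\Hom(Q_{1},Z)\big)$ as the degree-$0$ component of $Y\otimes Z$, where $Y=\Coker(Q_{0}^{\vee}\to Q_{1}^{\vee})$ is a \emph{finitely presented} complex of right $R$-modules. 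Since $0\to P\to X\to X/P\to 0$ is pure, $Y\otimes P\to Y\otimes X$ is a monomorphism of complexes, hence a monomorphism in each degree, so the class of $\theta$ is $0$, as needed.

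\emph{$(2)\Rightarrow(1)$.} Here the plan is to realize $0\to P\to X\to X/P\to 0$ as a filtered colimit of split short exact sequences and then tensor. The category $\mathbf{Ch}(R)$ of complexes of $R$-modules is locally finitely presented, with the finitely presented complexes as its finitely presented objects; in particular $X/P$ is the filtered colimit of the diagram $I$ whose objects are the morphisms $u\colon U\to X/P$ with $U$ finitely presented and whose morphisms are the triangles over $X/P$ ($I$ is filtered since finitely presented complexes are closed under finite colimits). For each $(U,u)\in I$ form the pullback $0\to P\to V_{(U,u)}\to U\to 0$ of $0\to P\to X\to X/P\to 0$ along $u$. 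By $(2)$ the map $\Hom(U,X)\to\Hom(U,X/P)$ is surjective (the degree-$0$ part of $(2)$ already suffices), so $u$ lifts to $X$, which is precisely the statement that this pulled-back sequence splits. Filtered colimits are exact in $\mathbf{Ch}(R)$, hence commute with the pullback, so $\varinjlim_{I}V_{(U,u)}=X\times_{X/P}\varinjlim_{I}U=X$, and $0\to P\to X\to X/P\to 0=\varinjlim_{I}\big(0\to P\to V_{(U,u)}\to U\to 0\big)$ is a filtered colimit of split exact sequences. Now for any complex $Y$ of right $R$-modules, $Y\otimes(-)$ carries a split exact sequence to a split exact sequence and, by Lemma~\ref{l2.1}$(3)$, commutes with filtered colimits; since a filtered colimit of exact sequences is exact, $0\to Y\otimes P\to Y\otimes X\to Y\otimes X/P\to 0$ is exact, i.e. $(1)$ holds.

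\emph{Where the work is.} The logical skeletons above are routine once two pieces of infrastructure (all due to García Rozas) are available, and setting these up carefully is the main obstacle. For $(2)\Rightarrow(1)$ one needs that $\mathbf{Ch}(R)$ is locally finitely presented with the finitely presented complexes as its finitely presented objects, that these are closed under finite colimits, and that pullbacks commute with filtered colimits there. For $(1)\Rightarrow(2)$ one needs the computation of $\underline{\Hom}$ out of the complexes $\overline{R}[n]$ and the ensuing duality with $\overline{R}[-n-1]\otimes(-)$, which is what lets one recognize the obstruction group as a value of a tensor functor by a \emph{finitely presented} complex, so that purity applies. Granting these, both implications are as above.
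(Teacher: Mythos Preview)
The paper does not actually prove this lemma: immediately before the statement it says ``We state here the characterizations of purity that can be found in \cite[Theorem~5.1.3]{Garc99}'', and no argument follows. You correctly identify the citation at the outset, so on the level of what the paper does, your proposal already matches.

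Your sketch goes further than the paper by outlining an independent proof. The strategy is the expected one for purity in a locally finitely presented abelian category: for $(2)\Rightarrow(1)$, write the quotient as a filtered colimit of finitely presented objects, pull back to get split sequences, and pass to the colimit; for $(1)\Rightarrow(2)$, run the standard obstruction argument on a two-term projective presentation of $U$ by finite sums of disks $\overline{R}[n]$ and identify the obstruction group as a value of a tensor by a finitely presented complex. Both outlines are sound in spirit. The one place that genuinely needs care is your duality step in $(1)\Rightarrow(2)$: you must check that the identification $\Hom(Q,Z)\cong (Q^{\vee}\otimes Z)^{0}$ for $Q$ a finite sum of disks is natural in $Q$ (not just objectwise), and that the relevant right-exactness holds for the paper's modified tensor $\otimes$ (as opposed to $\otimes^{\bullet}$), so that the cokernel really computes $(Y\otimes Z)^{0}$. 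An easier route through that point is to first deduce from purity (via Lemma~\ref{l2.1}(2) with $Y=\overline{M}[n]$) that each $0\to P^{k}\to X^{k}\to (X/P)^{k}\to 0$ is pure in $R$-Mod, and then lift $\phi$ degreewise using module-level purity and the finiteness of $U$; this avoids the duality bookkeeping entirely. Either way, as you note, the real content resides in the infrastructure from \cite{Garc99}.
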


Recall from \cite{AF91} that a complex $Q$ is DG-projective, if each
$R$-module $Q^m$ is projective and $\Hom^\bullet(Q, E)$ is exact for
any exact complex $E$. By \cite[Proposition 2.3.5]{Garc99}, a
complex $Q$ is DG-projective if and only if $\Ext^1(Q, E)=0$ for
every exact complex $E$.

\begin{lem}\label{l2.2} Let
$  0 \rightarrow P \rightarrow X\rightarrow X/P\rightarrow 0$ be a
pure exact sequence of complexes.  If $X$ is exact then both $P$ and
$X/P$ are also exact.
\end{lem}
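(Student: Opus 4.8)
The plan is to feed a single well-chosen test object into the definition of purity; the right choice is the complex $\underline{R}$ of right $R$-modules ($R$ in degree $0$). Note that it is the tensor form of purity from the definition, not the $\underline{\Hom}$-form of Lemma~\ref{p2.1}, that carries the information here.

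\textbf{Step 1: identify $\underline{R}\otimes(-)$.} For any complex $C$ of left $R$-modules, recall $\underline{R}\otimes^\bullet C=R\otimes_R C$, which is just $C$ with its original differential under the canonical isomorphisms $R\otimes_R C^m\cong C^m$. Hence $\underline{R}\otimes C=(\underline{R}\otimes^\bullet C)/\text{B}(\underline{R}\otimes^\bullet C)$ is the complex whose $m$th term is $C^m/\text{B}^m(C)$ and whose differential is zero, since the differential of $\underline{R}\otimes C$ sends $\overline{x\otimes y}$ to $\overline{d_{\underline{R}}(x)\otimes y}=0$. As $\underline{R}\otimes(-)$ is a functor, applying it to the inclusion $P\hookrightarrow X$ yields, in degree $m$, the map $P^m/\text{B}^m(P)\to X^m/\text{B}^m(X)$ induced by $P^m\hookrightarrow X^m$; this is well defined because $\text{B}^m(P)\subseteq\text{B}^m(X)$, as $P$ is a subcomplex of $X$.

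\textbf{Step 2: apply purity.} By hypothesis $0\to\underline{R}\otimes P\to\underline{R}\otimes X\to\underline{R}\otimes(X/P)\to0$ is exact, hence exact in each degree. Injectivity of the left-hand map in degree $m$ says exactly that $P^m\cap\text{B}^m(X)=\text{B}^m(P)$. On the other hand, $P$ being a subcomplex of $X$ always gives $\text{Z}^m(P)=P^m\cap\text{Z}^m(X)$. Now invoke exactness of $X$, i.e. $\text{Z}^m(X)=\text{B}^m(X)$: combining, $\text{Z}^m(P)=P^m\cap\text{Z}^m(X)=P^m\cap\text{B}^m(X)=\text{B}^m(P)$ for every $m$, so $\text{H}^m(P)=0$ and $P$ is exact.

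\textbf{Step 3: deduce exactness of $X/P$.} The long exact homology sequence of $0\to P\to X\to X/P\to0$ contains $\text{H}^m(X)\to\text{H}^m(X/P)\to\text{H}^{m+1}(P)$, and the two outer groups vanish by hypothesis and by Step~2, so $\text{H}^m(X/P)=0$ for all $m$. The only point that calls for care is Step~1 — correctly identifying $\underline{R}\otimes(-)$ as ``reduce modulo boundaries, with zero differential'' and checking that the induced maps are the obvious quotient maps; once that is settled the argument is immediate, and notably it uses purity only against the single complex $\underline{R}$, not against all complexes $Y$.
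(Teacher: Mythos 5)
Your proof is correct, but it takes a genuinely different route from the paper's. The paper argues on the $\Hom$ side: it invokes the characterization of purity via $\underline{\Hom}(U,-)$ for finitely presented $U$ (Lemma~\ref{p2.1}), applies it to $U=\underline{R}$, and then uses the long exact sequence of $\underline{\Ext}$ together with the DG-projectivity of $\underline{R}$ (so $\underline{\Ext}^1(\underline{R},X)=0$ when $X$ is exact) to conclude $\underline{\Ext}^1(\underline{R},P)=0$, i.e.\ $\text{H}^m(P)=0$. You instead work directly with the tensor definition of purity, testing against the single complex $\underline{R}$ of right $R$-modules and using the explicit identification $\underline{R}\otimes C\cong C/\text{B}(C)$ with zero differential; degreewise injectivity then yields $P^m\cap\text{B}^m(X)=\text{B}^m(P)$, which combined with $\text{Z}^m(P)=P^m\cap\text{Z}^m(X)=P^m\cap\text{B}^m(X)$ gives exactness of $P$, and the long exact homology sequence finishes $X/P$. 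Your computation of $\underline{R}\otimes(-)$ and of the induced maps is correct, so the argument is complete. What your approach buys is self-containedness: it needs neither Lemma~\ref{p2.1} nor $\underline{\Ext}$ nor DG-projectivity, only the definition of purity and an elementary identification of one functor. What the paper's approach buys is that it stays within the $\Hom$/$\Ext$ machinery already set up for the rest of Section~\ref{ns}. One small caveat: your parenthetical claim that the $\underline{\Hom}$-form of Lemma~\ref{p2.1} does not "carry the information" is an overstatement --- the paper's proof shows it does; the two forms are equivalent by that lemma, and either one suffices here.
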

\begin{proof}
By Lemma \ref{p2.1},  the sequence
$\underline{\Hom}(D,X)\rightarrow\underline{\Hom}(D,X/P)\rightarrow0$
is exact for all finitely presented complex $D$, and so the sequence
$$\underline{\Hom}
(\underline{R},X)\rightarrow\underline{\Hom}(\underline{R},X/P)\rightarrow0$$
is exact since $\underline{R}$ is finitely presented. On the other
hand, the sequence
$$\underline{\Hom}(\underline{R},X)\rightarrow\underline{\Hom}(\underline{R},X/P)\rightarrow
\underline{\Ext}^1(\underline{R},P)\rightarrow
\underline{\Ext}^1(\underline{R},X)$$ is exact, where
$\underline{\Ext}^1(\underline{R},X)=0$ since $\underline{R}$ is
DG-projective and $X$ is exact. Thus we get that
$\underline{\Ext}^1(\underline{R},P)=0$, and so
$\text{H}^{-n+1}(P)\cong \Ext^1(\underline{R},P[-n])=0$ for all
$n\in \mathbb{Z}$. This means that $P$ is an exact complex, and now
it is easily seen that $X/P$ is also exact.
\end{proof}

Let $R$ be a ring,  we denote by $\mathbf{E}(R)$ the class of exact
complexes  of flat $R$-modules such that they remain exact after
applying $I\otimes_R-$ for any injective right $R$-module $I$.
Recall that a complex $F$ is flat if  $F$ is exact and each
$\text{Z}^n(F)$ is a flat $R$-module for each $n\in \mathbb{Z}$.
Clearly, $\mathbf{E}(R)$ contains all flat complexes. As
characterized in \cite{Gill04} and \cite{Garc99}, there are initiate
connections between the purity and the flatness of complexes.
Inspired by this fact we give the following result.

\begin{lem}\label{l3.3}  Let $R$ be any
ring and  $E\in \mathbf{E}(R)$.  If $S\subseteq E$ is pure, then $S$
and $E/S$ are both in $\mathbf{E}(R)$.
\end{lem}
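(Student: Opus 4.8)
The plan is to verify, for both $S$ and $E/S$, the three conditions defining membership in $\mathbf{E}(R)$: exactness of the complex, flatness of each of its components, and exactness after applying $I\otimes_R-$ for every injective right $R$-module $I$.

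First I would settle the flatness of the components. The point is that a pure exact sequence of complexes is degreewise pure: testing the purity of $0\to S\to E\to E/S\to 0$ against the finitely presented complexes $\overline{M}$, with $M$ a finitely presented right $R$-module, and using the isomorphism $\overline{M}\otimes C\cong M\otimes_R C$ of Lemma \ref{l2.1}(2), one obtains that $0\to M\otimes_R S^n\to M\otimes_R E^n\to M\otimes_R (E/S)^n\to 0$ is exact for all $n$; equivalently, each $0\to S^n\to E^n\to (E/S)^n\to 0$ is a pure exact sequence of $R$-modules. Since $E^n$ is flat, and a pure submodule and a pure quotient of a flat module are flat, every $S^n$ and every $(E/S)^n$ is flat.

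For the two exactness conditions I would prove a single statement: \emph{if $I$ is a right $R$-module such that $I\otimes_R E$ is exact, then $I\otimes_R S$ and $I\otimes_R E/S$ are exact.} Taking $I=R$ then yields that $S$ and $E/S$ are exact, and taking $I$ injective yields the third condition; in both cases exactness of $I\otimes_R E$ is part of the hypothesis $E\in\mathbf{E}(R)$. To prove the statement, I would tensor the pure exact sequence $0\to S\to E\to E/S\to 0$ with the complex $\overline{I}$ and show that the resulting sequence $0\to \overline{I}\otimes S\to \overline{I}\otimes E\to \overline{I}\otimes E/S\to 0$ of complexes of abelian groups is again pure. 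For this, given an arbitrary complex $W$ of abelian groups, associativity of the tensor product of complexes (Lemma \ref{l2.1}(1)) gives $W\otimes(\overline{I}\otimes C)\cong (W\otimes\overline{I})\otimes C$, where $W\otimes\overline{I}$ is again a complex of right $R$-modules, so the exactness of $0\to(W\otimes\overline{I})\otimes S\to(W\otimes\overline{I})\otimes E\to(W\otimes\overline{I})\otimes E/S\to 0$ is just an instance of the purity of $0\to S\to E\to E/S\to 0$; since $W$ is arbitrary, the tensored sequence is pure. Now $\overline{I}\otimes E\cong I\otimes_R E$ by Lemma \ref{l2.1}(2) and is exact by assumption, so Lemma \ref{l2.2} forces $\overline{I}\otimes S\cong I\otimes_R S$ and $\overline{I}\otimes E/S\cong I\otimes_R E/S$ to be exact, as desired. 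Combining the steps gives $S,E/S\in\mathbf{E}(R)$.

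The step I expect to be the main obstacle is this last one, where one must be careful that tensoring with the interval complex $\overline{I}$ is compatible both with the reduced tensor product $\otimes$ and with purity. What makes it work is the combination of the associativity isomorphism of Lemma \ref{l2.1}(1) with the identification $\overline{M}\otimes C\cong M\otimes_R C$ of Lemma \ref{l2.1}(2), together with the fact that purity of a short exact sequence of complexes is tested against \emph{all} complexes of right $R$-modules, so that the complex $W\otimes\overline{I}$ can be fed back into the hypothesis.
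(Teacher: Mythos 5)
Your proposal is correct and follows essentially the same route as the paper: degreewise purity (via $\overline{M}\otimes C\cong M\otimes_R C$) gives flatness of the components, and tensoring the pure sequence with $\overline{I}$, using the associativity isomorphism of Lemma \ref{l2.1}(1) to see the result is still pure, reduces everything to Lemma \ref{l2.2}. The only cosmetic differences are that you unify the exactness of $S$ and $E/S$ with the injective-tensor condition by taking $I=R$ (the paper applies Lemma \ref{l2.2} to the original sequence directly), and you spell out more explicitly than the paper why the tensored sequence remains pure.
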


\begin{proof}  Let $M$ be any right $R$-module. Then
$$0\rightarrow\overline{M}[n]\otimes
S\rightarrow \overline{M}[n]\otimes E\rightarrow
\overline{M}[n]\otimes E/S\rightarrow0$$ is exact. By Lemma
\ref{l2.1}(2), the sequence
$$0\rightarrow M\otimes_R
S[n]\rightarrow M\otimes_R E[n]\rightarrow M\otimes_R
(E/S)[n]\rightarrow0$$ is exact. Therefore $S^n\subseteq E^n$ is
pure for each $n\in\mathbb{Z}$. Since each $E^n$ is flat, we get
that $S^n$ and $E^n/S^n$ are flat for each $n\in\mathbb{Z}$.

By Lemma \ref{l2.2}, we get that $S$ and $E/S$ are exact. It remains
to show that for any injective right $R$-module $I$, $I\otimes_RS$
and $I\otimes_RE/S$ are exact.

Since the exact sequence $0\rightarrow S\rightarrow E\rightarrow
E/S\rightarrow0$ is pure, we get that the sequence
$$0\rightarrow\overline{I}\otimes
S\rightarrow \overline{I}\otimes E\rightarrow \overline{I}\otimes
E/S\rightarrow0$$ is exact and pure by Lemma \ref{l2.1}(1). Note
that $\overline{I}\otimes E\cong I\otimes_R E$ is exact by Lemma
\ref{l2.1}(2), then $\overline{I}\otimes S$ and $\overline{I}\otimes
E/S$ are exact by Lemma \ref{l2.2}, and so $I\otimes_RS$ and
$I\otimes_RE/S$ are exact by Lemma \ref{l2.1}(2).
\end{proof}

\begin{lem}\label{l2.4}  Let ${\rm Card}(R)\leq \kappa$,
where $\kappa$ is some infinite cardinal. Then for any
$F\in\mathbf{E}(R)$ and any element $x\in F$ (by this we mean $x\in
F^n$ for some $n$), there exists a subcomplex $L\subseteq F$ with
$x\in L$, $L, F/L\in \mathbf{E}(R)$ and  ${\rm Card}(L)\leq \kappa$.
\end{lem}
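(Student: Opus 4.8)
The plan is to produce $L$ as the union of an increasing chain of subcomplexes $L_0 \subseteq L_1 \subseteq \cdots$, where each $L_i$ has cardinality at most $\kappa$, we have $x \in L_0$, and where the union $L = \bigcup_i L_i$ is chosen so that the inclusion $L \subseteq F$ is pure; then Lemma \ref{l3.3} immediately gives $L, F/L \in \mathbf{E}(R)$, and the cardinality bound is preserved because a countable union of sets of size $\le \kappa$ still has size $\le \kappa$. This is the standard ``purity via countable towers'' argument, and the only real work is arranging the purity of the eventual inclusion.

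First I would recall the concrete handle on purity provided by Lemma \ref{p2.1}: a short exact sequence $0 \to P \to X \to X/P \to 0$ of complexes is pure iff $\underline{\Hom}(U, X) \to \underline{\Hom}(U, X/P)$ is surjective for every finitely presented complex $U$. Unwinding this, purity of $L \subseteq F$ amounts to the solvability, inside $L$, of every finite system of linear equations (with parameters from $L$) that is solvable in $F$ — more precisely, every morphism $U \to F/L$ from a finitely presented complex $U$ lifts to $U \to F$, and since $U$ is finitely presented one only needs to adjust finitely many coordinates. So the construction is: given $L_i$ of size $\le \kappa$, for each of the (at most $\kappa$, since $\mathrm{Card}(R) \le \kappa$ and $L_i$ is small) relevant ``lifting problems'' with data in $L_i$, throw into $L_{i+1}$ a solution chosen in $F$; since each solution involves only finitely many new elements and there are at most $\kappa$ problems, $\mathrm{Card}(L_{i+1}) \le \kappa$. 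Taking $L = \bigcup_i L_i$, any finitely presented $U \to F/L$ has image landing in some $L_i/L$ after finitely many coordinates, hence was already resolved at stage $i+1$, so $L \subseteq F$ is pure.

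The step I expect to be the main obstacle — and the place where one must be careful — is the bookkeeping that guarantees the set of ``lifting problems over $L_i$'' genuinely has cardinality $\le \kappa$, and that each is witnessed by only finitely many elements of $F$. This hinges on: finitely presented complexes $U$ are built from finitely many finitely presented modules, each a cokernel of a map $R^m \to R^n$, so up to the relevant equivalence there are at most $\kappa$ of them; and a morphism $U \to F/L_i$ is determined by the images of finitely many generators, each of which is a coset represented by an element of $F$ built from finitely many entries of the $F^n$'s. Enumerating these problems and picking witnesses requires the axiom of choice but no deeper input. Once the tower is built the verification is routine: cardinality is clear, $x \in L_0 \subseteq L$, purity follows from the finitely-presented-means-finite-data principle, and then Lemma \ref{l3.3} hands us $L, F/L \in \mathbf{E}(R)$.
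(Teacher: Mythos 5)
Your proposal is correct and follows the paper's argument exactly: find a pure subcomplex $L\subseteq F$ with $x\in L$ and ${\rm Card}(L)\leq\kappa$, then invoke Lemma \ref{l3.3} to conclude $L, F/L\in\mathbf{E}(R)$. The only difference is that the paper simply cites \cite[Lemma 4.6]{Gill04} for the existence of such a pure subcomplex, whereas you sketch the standard tower/witness-closure construction of that lemma by hand (correctly, modulo routine bookkeeping such as closing each stage under the differential and module operations).
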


\begin{proof} By \cite[Lemma 4.6]{Gill04},  there exists a pure subcomplex $L\subseteq F$ with
$x\in L$ and  ${\rm Card}(L)\leq \kappa$, then, by Lemma \ref{l3.3},
we get that  $L$ and  $F/L$ are contained in  $\mathbf{E}(R)$.
\end{proof}

\begin{lem}\label{l3.5} For any ring $R$ the pair $(\mathbf{E}(R), \mathbf{E}(R)^\bot)$ is a perfect cotorsion
pair.
\end{lem}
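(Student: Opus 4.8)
The plan is to verify the three defining conditions of a cotorsion pair for $(\mathbf{E}(R), \mathbf{E}(R)^\bot)$ and then upgrade completeness to perfectness. The orthogonality condition $\Ext^1(E, B) = 0$ for $E \in \mathbf{E}(R)$ and $B \in \mathbf{E}(R)^\bot$ holds by definition of the right orthogonal class, and condition (2) of Definition 2.3 is also automatic. The real content is condition (3): if $\Ext^1(X, B) = 0$ for all $B \in \mathbf{E}(R)^\bot$, then $X \in \mathbf{E}(R)$. For this, and for completeness, the key is to show that the cotorsion pair is \emph{cogenerated by a set}, so that the Eklof--Trlifaj theorem applies.

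First I would fix an infinite cardinal $\kappa \geq \mathrm{Card}(R)$ and let $\mathcal{S}$ be a set of representatives (up to isomorphism) of all complexes in $\mathbf{E}(R)$ of cardinality at most $\kappa$. I claim $\mathcal{S}^\bot = \mathbf{E}(R)^\bot$. One inclusion is trivial since $\mathcal{S} \subseteq \mathbf{E}(R)$. For the converse, suppose $B \in \mathcal{S}^\bot$; I must show $\Ext^1(F, B) = 0$ for every $F \in \mathbf{E}(R)$. The standard device is a transfinite filtration argument: using Lemma~\ref{l2.4} repeatedly, build a continuous chain $0 = F_0 \subseteq F_1 \subseteq \cdots \subseteq F_\alpha \subseteq \cdots$ of subcomplexes of $F$ with union $F$, such that each $F_\alpha \in \mathbf{E}(R)$ and each successive quotient $F_{\alpha+1}/F_\alpha$ is isomorphic to a member of $\mathcal{S}$ (here one also needs that $F_{\alpha+1}/F_\alpha \in \mathbf{E}(R)$, which follows from Lemma~\ref{l3.3} applied to the pure inclusion $F_\alpha \subseteq F_{\alpha+1}$, since purity of $L \subseteq F/F_\alpha$ pulls back appropriately). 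By the Eklof Lemma, $\Ext^1(F, B) = 0$ since $B$ is right-orthogonal to every quotient in the filtration. Thus $\mathbf{E}(R) \subseteq {}^\bot(\mathcal{S}^\bot)$, and combined with the obvious reverse inclusion and the fact that ${}^\bot(\mathcal{S}^\bot)$ is always the left-hand side of the cotorsion pair cogenerated by $\mathcal{S}$, we conclude that $({}^\bot(\mathbf{E}(R)^\bot), \mathbf{E}(R)^\bot)$ equals the cotorsion pair cogenerated by $\mathcal{S}$; in particular $\mathbf{E}(R) = {}^\bot(\mathbf{E}(R)^\bot)$, establishing condition (3), and the pair is complete by Eklof--Trlifaj.

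It remains to pass from \emph{complete} to \emph{perfect}. By a theorem of Enochs (the ``cover $\Leftrightarrow$ precover + closed under direct limits'' principle, as used in the flat cover proof), a complete cotorsion pair $(\mathcal{A}, \mathcal{B})$ with $\mathcal{A}$ closed under direct limits is perfect. So I would check that $\mathbf{E}(R)$ is closed under direct limits: given a directed system $(E_i)$ in $\mathbf{E}(R)$, the direct limit $\varinjlim E_i$ is exact (direct limits are exact in the category of complexes) with flat cycles (direct limits of flat modules are flat, and $\mathrm{Z}^n$ commutes with direct limits by exactness), and for any injective right $R$-module $I$ we have $I \otimes_R \varinjlim E_i \cong \varinjlim (I \otimes_R E_i)$, a direct limit of exact complexes, hence exact. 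So $\varinjlim E_i \in \mathbf{E}(R)$, and perfectness follows.

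The main obstacle I anticipate is the transfinite filtration construction in the proof that $\mathcal{S}$ cogenerates the pair: one must carefully ensure at each successor stage that the newly adjoined ``small'' piece can be chosen inside $\mathbf{E}(R)$ \emph{relative to} the already-constructed $F_\alpha$ — i.e. that the quotient $F_{\alpha+1}/F_\alpha$ lands in $\mathcal{S}$ while $F_{\alpha+1}$ itself stays in $\mathbf{E}(R)$ — and this is precisely where Lemma~\ref{l2.4} (for finding a small pure subcomplex containing a prescribed element of $F/F_\alpha$) interlocks with Lemma~\ref{l3.3} (purity being inherited so that both the small subcomplex and its complement stay in $\mathbf{E}(R)$). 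Handling limit ordinals (continuity of the chain, and checking $\mathbf{E}(R)$ is closed under transfinite unions of pure chains, which again reduces to exactness and flatness being preserved, plus Lemma~\ref{l2.2}-type arguments) is routine but needs to be stated. Everything else is a direct appeal to Eklof--Trlifaj and Enochs' criterion.
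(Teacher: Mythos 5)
Your overall strategy coincides with the paper's: show that $(\mathbf{E}(R),\mathbf{E}(R)^\perp)$ is cogenerated by a set using Lemma~\ref{l2.4} (the paper delegates the transfinite filtration you spell out to \cite[Remark 3.2]{AE01}), then use closure of $\mathbf{E}(R)$ under direct limits to pass from completeness to perfectness via the Enochs machinery (\cite[Corollaries 2.11--2.13]{AE01}). Your verification that $\mathbf{E}(R)$ is closed under direct limits, and your handling of the successor and limit stages of the filtration (Lemma~\ref{l3.3} applied to pure subcomplexes, transitivity of purity, unions of pure chains), are correct and are exactly the points the paper relies on.

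There is, however, one genuine gap: you assert that from $\mathcal{S}^\perp=\mathbf{E}(R)^\perp$ together with $\mathbf{E}(R)\subseteq{}^\perp(\mathcal{S}^\perp)$ it follows ``in particular'' that $\mathbf{E}(R)={}^\perp(\mathbf{E}(R)^\perp)$, which is condition (3) of Definition 2.3. The inclusion ${}^\perp(\mathbf{E}(R)^\perp)\subseteq\mathbf{E}(R)$ does \emph{not} follow from cogeneration by a set alone: for an arbitrary class $\mathcal{F}$, the left half ${}^\perp(\mathcal{F}^\perp)$ of the induced cotorsion pair can be strictly larger than $\mathcal{F}$. The standard repair --- and the reason the paper explicitly records that $\mathbf{E}(R)$ is closed under direct sums, direct summands and extensions and contains all projective complexes --- is the retract argument: given $X\in{}^\perp(\mathbf{E}(R)^\perp)$, the Eklof--Trlifaj construction yields an exact sequence $0\to B\to A\to X\to 0$ with $B\in\mathcal{S}^\perp=\mathbf{E}(R)^\perp$ and $A$ a transfinite extension of copies of objects of $\mathcal{S}$ and of projective complexes; closure of $\mathbf{E}(R)$ under extensions and well-ordered direct limits, together with the fact that it contains the projective complexes, puts $A$ in $\mathbf{E}(R)$; then $\Ext^1(X,B)=0$ forces the sequence to split, so $X$ is a direct summand of $A$ and hence lies in $\mathbf{E}(R)$ by closure under direct summands. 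You need to add this step (or cite the corresponding corollary of \cite{AE01}) for the argument to be complete.
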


\begin{proof} By Lemma \ref{l2.1}(3) the class $\mathbf{E}(R)$ is closed under direct
limits. Clearly, $\mathbf{E}(R)$ is closed under direct sums, direct
summands and extensions. Using Lemma \ref{l2.4} and a similar method
as proved in \cite[Remark 3.2]{AE01}, we get that the pair
$(\mathbf{E}(R), \mathbf{E}(R)^\bot)$ is cogenerated by a set. On
the other hand, the class $\mathbf{E}(R)$ contains all projective
complexes. Thus, by \cite[Corollaris 2.11, 2.12 and 2.13]{AE01}, the
pair $(\mathbf{E}(R), \mathbf{E}(R)^\bot)$ is a perfect cotorsion
pair.
\end{proof}

\vspace{0.3cm}

\hspace{-0.5cm}\emph{Proof of Theorem A.} Let $M$ be any $R$-module
and $g: E\rightarrow \underline{M}[1]$ be an
$\mathbf{E}(R)$-precover  which exists by Lemma \ref{l3.5}. This
gives the following commutative diagram:
$$
\xymatrix@C=15pt@R=30pt{
E=: \ \cdots \ar[r]^{}   &E^{-2} \ar[rr]^{}\ar[dd]^{}  & & E^{-1} \ar[dr]_{\pi}\ar[rr]^{} \ar[dd]^{g^{-1}}   &  & \ E^0 \ar[rr]^{}\ar[dd]^{} &&  E^1 \ar[rr]^{}\ar[dd]^{} & & \cdots       \\
& \ \ & & \ &  G \ar[dd]^{\widetilde{g}} \ar@{.>}[ur]^{}  \\
\underline{M}[1]=: \ \cdots \ar[r]^{}&0 \ar[rr]^{} & &M
\ar[dr]_{=}\ar[rr]_{}
& & 0 \ar[rr]^{} && 0 \ar[rr]^{} & & \cdots \\
 & \ \ & &\       &  M \ar[ur]^{}          }
$$
where $G=\text{Z}^0(E)$ is Gorenstein flat. In the following we show
that $\widetilde{g}: G\rightarrow M$ is a Gorenstein flat precover
of $M$.

Let $\widetilde{f}: H\rightarrow M$ be a homomorphism with $H$
Gorenstein flat. Then there exists a complex  $F$ in $\mathbf{E}(R)$
such that $H=\text{Z}^0(F)$. Now one can extend $\widetilde{f}$ to a
morphism $f: F\rightarrow \underline{M}[1]$ of complexes as follows:
$$
\xymatrix@C=15pt@R=30pt{
F=: \ \cdots \ar[r]^{}   &F^{-2} \ar[rr]^{}\ar[dd]^{}  & & F^{-1} \ar[dr]_{\sigma}\ar[rr]^{} \ar[dd]^{f^{-1}}   &  & \ F^0 \ar[rr]^{}\ar[dd]^{} &&  F^1 \ar[rr]^{}\ar[dd]^{} & & \cdots       \\
& \ \ & & \ &  H \ar[dd]^{\widetilde{f}} \ar@{.>}[ur]^{}  \\
\underline{M}[1]=: \ \cdots \ar[r]^{}&0 \ar[rr]^{} & &M
\ar[dr]_{=}\ar[rr]_{}
& & 0 \ar[rr]^{} && 0 \ar[rr]^{} & & \cdots \\
 & \ \ & &\       &  M \ar[ur]^{}          }
$$

Since $g: E\rightarrow \underline{M}[1]$ is an
$\mathbf{E}(R)$-precover, there exists a morphism $h: F\rightarrow
E$ of complexes such that the diagram $$\xymatrix{
                &        E \ar[d]^{g}     \\
  F \ar[ur]^{h} \ar[r]_{f} & \underline{M}[1]            }$$ is
  commutative.

The morphism  $h$ induces a homomorphism $\widetilde{h}:
H\rightarrow G$ such that the following diagram
$$
\xymatrix@C=15pt@R=30pt{
F=: \ \cdots \ar[r]^{}   &F^{-2} \ar[rr]^{}\ar[dd]^{h^{-2}}  & & F^{-1} \ar[dr]_{\sigma}\ar[rr]^{} \ar[dd]^{h^{-1}}   &  & \ F^0 \ar[rr]^{}\ar[dd]^{h^0} &&  F^1 \ar[rr]^{}\ar[dd]^{h^1} & & \cdots       \\
& \ \ & & \ &  H \ar[dd]^{\widetilde{h}} \ar@{.>}[ur]^{}  \\
E=: \ \cdots \ar[r]^{}&E^{-2} \ar[rr]^{} & &E^{-1}
\ar[dr]_{\pi}\ar[rr]_{}
& & E^0 \ar[rr]^{} && E^1 \ar[rr]^{}  && \cdots \\
 & \ \ & &\       &  G \ar[ur]^{}          }
$$
is commutative. Note that
$\widetilde{f}\sigma=f^{-1}=g^{-1}h^{-1}=\widetilde{g}\pi
h^{-1}=\widetilde{g}\widetilde{h}\sigma,$ then
$\widetilde{f}=\widetilde{g}\widetilde{h}$ since $\sigma$ is an
epimorphism.  This implies that  $\widetilde{g}: G\rightarrow M$ is
a Gorenstein flat precover of $M$.

\end{document}